\def\tt{{\boldsymbol t}}
\def\zz{{\boldsymbol z}}
\DeclareMathOperator\Sym{Sym}
\def\Wt{\tilde{W}}
\def\II{{\mathcal I}}
\def\Il{\II_\lambda}
\def\ep{\epsilon}
\DeclareMathOperator\One{\mathds 1}
\DeclareMathOperator\C{\mathbb C}
\DeclareMathOperator\Fl{{\mathcal F}_\lambda}
\DeclareMathOperator\spa{span}
\newtheorem{fact}{Fact}[section]
\newtheorem{theorem}[fact]{Theorem}
\newtheorem{definition}[fact]{Definition}
\newtheorem{example}[fact]{Example}
\newtheorem{rremark}[fact]{Remark}
\newenvironment{remark}{\begin{rremark} \rm}{\end{rremark}}
\newtheorem{corollary}[fact]{Corollary}
\title[Equivariant CSM classes in partial flag varieties]{Equivariant Chern-Schwartz-MacPherson classes in partial flag varieties: interpolation and formulae}
\author{R. Rim\'anyi}
\address{Department of Mathematics, University of North Carolina at Chapel Hill, USA}
\email{rimanyi@email.unc.edu}
\author{A. Varchenko}
\address{Department of Mathematics, University of North Carolina at Chapel Hill, USA}
\email{anv@email.unc.edu}
\thanks{R.R. was supported by NSF grant DMS-1200685 and A.V.
was supported in part by NSF grant DMS-1362924,  Simons Foundation, and the Max Planck Institute in Bonn.}
\begin{document}

\dedicatory{Dedicated to Piotr Pragacz on the occasion
     of his $60$th birthday}

\begin{abstract}
Consider the natural torus action on a partial flag manifold $\Fl$. Let $\Omega_I\subset \Fl$ be an open Schubert variety, and let $c^{sm}(\Omega_I)\in H_T^*(\Fl)$ be its torus equivariant Chern-Schwartz-MacPherson class. We show a set of interpolation properties that uniquely determine $c^{sm}(\Omega_I)$, as well as a formula, of `localization type', for $c^{sm}(\Omega_I)$. In fact, we proved similar results for a class $\kappa_I\in H_T^*(\Fl)$ --- in the context of quantum group actions on the equivariant cohomology groups of partial flag varieties. In this note we show that $c^{SM}(\Omega_I)=\kappa_I$.
\end{abstract}

\maketitle

\section{Introduction}
An interesting chapter of enumerative geometry is the theory of characteristic classes of singular varieties. One of the fundamental results about Chern-Schwartz-MacPherson (CSM) characteristic classes is their calculation for degeneracy loci in \cite{PP} by Pragacz and Parusi\'nski. In this short note---dedicated to the 60th birthday of P. Pragacz---we prove a result about CSM classes of Schubert cells in partial flag varieties.

Namely, we present a set of interpolation properties that uniquely determine the sought CSM class. Such interpolation characterisation was known before for the leading term of the CSM class, the fundamental class  \cite{FR}. A solution of these interpolation conditions is a weight function in the terminology of our earlier works. Weight functions were considered by Tarasov and Varchenko \cite{TV1, TV2} in the context of q-hypergeometric solutions of qKZ differential equations, and turn up in our recent works (with Tarasov, Gorbounov) in the context of geometric interpretations of Bethe algebras.

In fact, our results on the interpolation conditions and their solution are present in \cite{RTV1, RTV2, RTV3}  for certain cohomology classes $\kappa_I$---see also one of the main motivations \cite[Section 3.3.4]{MO}. In the present note we only prove that $\kappa_I$ is equal to the CSM class of a Schubert cell $\Omega_I$. By giving a proof of this fact and by presenting an accessible description of the interpolation conditions and the weight functions (with appropriate convention changes) we hope to bring the attention of researchers in Schubert calculus and characteristic classes to the quantum group aspects of CSM classes.

In Section \ref{sec 8}  we extend this result to the case of
   Chern-Schwartz-MacPherson classes of Schubert cells in $G/P$
   where $P$ is a parabolic subgroup of a semisimple group  $G$. 
\smallskip

The authors thank P. Aluffi, L. M. Feh\'er, and T. Ohmoto for useful discussions on the topic.

\section{Weight functions} \label{sec:weightfunctions}

\subsection{Definition of weight functions}

Let us fix non-negative integers $n$ and $N$, as well as $\lambda=(\lambda_1,\ldots, \lambda_N) \in \{0,1,2,\ldots\}^N$ with $|\lambda|=\sum_{k=1}^N \lambda_k=n$.
Consider $N$-tuples $I=(I_1,\ldots,I_N)$, where $I_k\subset \{1,\ldots,n\}$, $|I_k|=\lambda_k$, and $I_k\cap I_l=\emptyset$ for $k\not= l$. The set of such $N$-tuples will be denoted by $\Il$.
For $I\in \Il$ we will use the notations
\[
I^{(k)}=\bigcup_{l=1}^k I_l=\{i^{(k)}_1< i^{(k)}_2 < \ldots < i^{(k)}_{\lambda^{(k)}} \},
\qquad
\lambda^{(k)}=|I^{(k)}|=\sum_{l=1}^k \lambda_l
\]
and the variables
\begin{equation}\label{eqn:vars}
\tt=\{
t^{(k)}_a: k=1,\ldots, N-1, a=1,\ldots,\lambda^{(k)}\},
\qquad
\zz=\{z_1,\ldots,z_n\}.
\end{equation}

For $I\in \Il$, $k=1,\ldots,N-1$, $a=1,\ldots,\lambda^{(k)}$, $b=1,\ldots,\lambda^{(k+1)}$ define
\[
  \ell^{(k)}_I(a,b) = \begin{cases}
    1+t^{(k+1)}_b- t^{(k)}_a                          & \text{if } i^{(k+1)}_b<i^{(k)}_a \\
    \ \ \ \ \  t^{(k+1)}_b- t^{(k)}_a                             & \text{if } i^{(k+1)}_b>i^{(k)}_a, \\
                             \end{cases}
\]
with the convention that $t^{(N)}_b=z_b$.

Let the group $S^{(\lambda)}=S_{\lambda^{(1)}} \times \ldots \times S_{\lambda^{(N-1)}} $ act on the set of variables $\tt$ in such a way that elements of $S_{\lambda^{(k)}}$ permute the lower indexes of $t^{(k)}_a$'s. Let $\Sym_{\lambda} f(\tt)=\sum_{\sigma\in S^{(\lambda)}} f( \sigma(\tt) ).$

Denote $e_\lambda(\tt)=\prod_{k=1}^{N-1} \prod_{a=1}^{\lambda^{(k)}}\prod_{b=1}^{\lambda^{(k)}} (1+t^{(k)}_b-t^{(k)}_a)$.

\begin{definition}
Define the weight function
\[
W_I(\tt,\zz)=
\Sym_{\lambda}
     \left[
         \prod_{k=1}^{N-1} \left(
            \prod_{a=1}^{\lambda^{(k)}} \prod_{b=1}^{\lambda^{(k+1)}} \ell^{(k)}_I(a,b) \cdot
           \prod_{a=1}^{\lambda^{(k)}}  \prod_{b=a+1}^{\lambda^{(k)}}
              \frac{1+t^{(k)}_b-t^{(k)}_a}{t^{(k)}_b-t^{(k)}_a}
        \right)
\right],
\]
and the modified weight function
\[
\Wt_I(\tt,\zz)=W_I(\tt,\zz)/e_\lambda(\tt).
\]
\end{definition}

The weight functions $W_I$ are polynomials (despite the appearance of $t^{(k)}_b-t^{(k)}_a$ factors in the denominators), while the modified weight functions $\Wt_I$ are rational functions.

\begin{example}
For $N=2$, $\lambda=(1,n-1)$ we have
\[
W_{(\{k\}, \{1,\ldots,n\}-\{k\})}= \prod_{i=1}^{k-1} (1+z_i-t^{(1)}_1) \cdot \prod_{i=k+1}^n (z_i- t^{(1)}_1).
\]
\end{example}

\section{Combinatorial description of weight function}
\label{sec comb}

In this section we show a diagrammatic interpretation of the terms of the weight function. Let $I\in\Il$. Consider a table with $n$ rows
and $N$ columns. Number the rows from top to bottom and number the columns from
left to right. Certain boxes of this table will be distinguished, as follows.
In the $k$'th column distinguish boxes in the $i$'th row if $i\in I^{(k)}$.
This way all the boxes in the last column will be distinguished since
$I^{(N)}=\{1,\ldots, n\}$.

Now we will define fillings of the tables by putting various variables in
the distinguished boxes. First, put the variables $z_1,\ldots, z_n$ into the last
column from top to bottom. Now choose permutations
$\sigma_1\in S_{\lambda^{(1)}},  \ldots,
\sigma_{N-1} \in S_{\lambda^{(N-1)}}$.
Put the variables $t^{(k)}_{\sigma_k(1)}, \ldots,  t^{(k)}_{\sigma_k(\lambda^{(k)})}$
in the distinguished boxes of the $k$'th column from top to bottom.

Each such filled table will define a rational function as follows. Let $u$ be
a variable in the filled table in one of the columns $1,\ldots,N-1$. If $v$ is
a variable in the next column, but above the position of $u$ then consider
the factor $1+v-u$ (`type-1 factor'). If $v$ is a variable in the next column,
but below the position of $u$ then consider the factor $v-u$ (`type-2
factor'). If $v$ is a variable in the same column, but below the position
of $u$ then consider the factor $(1+v-u)/(v-u)$ (`type-3 factor').
The rule is illustrated in the following figure.
\[
\begin{tabular}{c|c}
& \\
\cline{2-2}
& \multicolumn{1}{|c|}{$v$} \\
\cline{2-2}
& \\
\cline{1-1}
\multicolumn{1}{|c|}{$u$} & \\
\cline{1-1}
&
\end{tabular} \qquad\qquad\qquad
\begin{tabular}{c|c}
& \\
\cline{1-1}
\multicolumn{1}{|c|}{$u$} & \\
\cline{1-1}
& \\
\cline{2-2}
& \multicolumn{1}{|c|}{$v$} \\
\cline{2-2}
&
\end{tabular} \qquad\qquad\qquad
\begin{tabular}{|c|}
\\
\cline{1-1}
$u$ \\
\cline{1-1}
\\
\cline{1-1}
$v$ \\
\cline{1-1}
\\
\end{tabular}
\]
\[1+v-u \qquad\qquad v-u \qquad\qquad \frac{1+v-u}{v-u}\]
\[\text{type\:-1} \qquad\qquad\ \text{type\:-2} \qquad\qquad\ \text{type\:-3}\]
For each variable $u$ in the table consider all these factors and multiply them together. This is ``the term associated with the filled table''.

One sees that $W_I$ is the sum of terms associated with the filled tables corresponding to all choices $\sigma_1,\ldots, \sigma_{N-1}$. For example, $W_{\{2\},\{1\},\{3\}}$ is the sum of two terms associated with the filled tables
\[
\begin{tabular}{|c|c|c|}
\hline
& $t^{(2)}_1$ & $z_1$ \\
\hline
$t^{(1)}_1$ & $t^{(2)}_2$ & $z_2$ \\
\hline
& & $z_3$ \\
\hline
\end{tabular},\qquad\qquad
\begin{tabular}{|c|c|c|}
\hline
& $t^{(2)}_2$ & $z_1$ \\
\hline
$t^{(1)}_1$ & $t^{(2)}_1$ & $z_2$ \\
\hline
& & $z_3$ \\
\hline
\end{tabular}.
\]
The term corresponding to the first filled table is
\[
\underbrace{(1+t^{(2)}_1-t^{(1)}_1)(1+z_1-t^{(2)}_2)}_{type-1}\,
\underbrace{(z_2-t^{(2)}_1)(z_3-t^{(2)}_1)(z_3-t^{(2)}_2)}_{type-2}\,
\underbrace{\frac{(1+t^{(2)}_2-t^{(2)}_1)}{(t^{(2)}_2-t^{(2)}_1)} }_{type-3},
\]
and the term corresponding to the second filled table is
\[
\underbrace{(1+t^{(2)}_2-t^{(1)}_1)\>(1+z_1-t^{(2)}_1)}_{type-1}\,
\underbrace{(z_2-t^{(2)}_2)\>(z_3-t^{(2)}_2)\>(z_3-t^{(2)}_1)}_{type-2}\,
\underbrace{\frac{(1+t^{(2)}_1-t^{(2)}_2)}{(t^{(2)}_1-t^{(2)}_2)} }_{type-3}.
\]

\section{Partial flag manifold, equivariant cohomology, Schubert varieties}
\label{sec:flag}

Let $\ep_1,\ldots, \ep_n$ be the standard basis in $\C^n$. Consider the partial flag manifold $\Fl$ parameterizing chains
\[
0=F_0 \subset F_1 \subset \ldots \subset F_N=\C^n,
\]
where $\dim F_i/F_{i-1}=\lambda_i$. The standard action of the torus $T=(\C^*)^n$ on $\C^n$ induces an action of $T$ on $\Fl$. The fixed points of this action are the points $x_I=(F_i)$ with $F_k=\spa\{ \ep_i: i\in I^{(k)}\}$.

Consider variables $\Gamma_i=\{\gamma_{i,1},\ldots,\gamma_{i,\lambda_i}\}$ for $i=1,\ldots,N$, and let $\Gamma=\{\Gamma_1,\ldots,\Gamma_N\}$. The group $S_\lambda=\times_i S_{\lambda_i}$ acts on $\Gamma$ by permuting the variables with the same first index. The complex coefficient, $T$ equivariant cohomology ring of $\Fl$ is presented as
\begin{equation}\label{eqn:presentation}
H_T^*(\Fl)=
\C[\Gamma]^{S_\lambda}\otimes \C[\zz] \ /\  \langle f(\Gamma)=f(\zz) \ \text{for any}\ f\in\C[\zz]^{S_n} \rangle.
\end{equation}
Here $\gamma_{i,j}$ for $j=1,\ldots,\lambda_i$ are the Chern roots of the bundle whose fiber is $F_i/F_{i-1}$, and $\zz$ are the Chern roots of the torus.

Let $V_i=\spa(\ep_1,\ldots,\ep_i)$. For $I\in \Il$ define the Schubert cell
\[
\Omega_I=\{ F\in \Fl\ | \ \dim(F_p\cap V_q)=\#\{i\in I^{(p)}|i\leq q\} \ \forall p\leq N \ \forall q\leq n\}.
\]
The Schubert cell $\Omega_I$ is an affine space of dimension
\[
\#\{(i,j)\in \{1,\ldots,n\}^2\ |\ i\in I_a, j\in I_b, a<b, i>j\}.
\]

The point $x_I$ is a smooth point of the Schubert cell $\Omega_I$. The weights of the torus action on the tangent space $T_{x_I}\Omega_I$ are
\[    z_b-z_a \qquad \text{for} \qquad    a>b,\ a\in I_k, \ b\in I_l,  \ k<l. \]
The weights of the torus action on a $T$ invariant normal space $N_{x_I}\Omega_I$ to $T_{x_I}\Omega_I$ in $T_{x_I}\Fl$ are
\[    z_b-z_a \qquad \text{for} \qquad   a<b,\  a\in I_k, \ b\in I_l, \  k<l. \]
Hence we have
\[
c(T_{x_I}\Omega_I)=\prod_{k<l} \prod_{a\in I_k}  \mathop{\prod_{b \in I_l}}_{a>b}    (1+z_b-z_a),\qquad
e(N_{x_I}\Omega_I )=\prod_{k<l} \prod_{a\in I_k}  \mathop{\prod_{b \in I_l}}_{b>a}        (z_b-z_a),\qquad
\]
for the tangent total Chern class and the normal Euler class of $\Omega_I$ at $x_I$.

\section{Unique classes defined by interpolation}
\label{sec:axiom}

The restriction of a cohomology class $\omega \in H_T^*(\Fl)$ to the fixed point $x_J$ will be denoted $\omega|_{x_J}$. In terms of variables this means the substitution
\begin{equation}\label{eqn:rest}
\{\gamma_{k,i}\ |\ i=1,\ldots,\lambda_k\} \mapsto \{z_a\ |\  a\in J_k\}.
\end{equation}

For an $S^{(\lambda)}$ symmetric function $f(\tt,\zz)$ in variables $\tt, \zz$ as in (\ref{eqn:vars}) let $f(\Gamma,\zz)$ denote the substitution
\[
\{ t^{(k)}_a \ |\  a=1\ldots, \lambda^{(k)} \}
\mapsto
\{ \Gamma_1, \ldots, \Gamma_k \}.
\]

\begin{theorem}\label{thm:axiom}
There are unique classes $\kappa_I \in H_T^*(\Fl)$ satisfying
\begin{enumerate}[(I)]
\item \label{ax1} $\kappa_I|_{x_J}$ is divisible by $c(T_{x_J}\Omega_J)$;
\item \label{ax2} $\kappa_I|_{x_I}=c(T_{x_I}\Omega_I)e(N_{x_I}\Omega_I)$;
\item \label{ax3} $\kappa_I|_{x_J}$ has degree less than $\dim \Fl=\sum_{k<l} \lambda_k \lambda_l$, if $I\not= J$.
\end{enumerate}
Moreover,
\begin{equation}\label{eqn:kW}
\kappa_I=[\tilde{W}_I(\Gamma,\zz)].
\end{equation}
\end{theorem}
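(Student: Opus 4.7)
The plan is to prove existence by verifying that $\kappa_I := [\tilde{W}_I(\Gamma,\zz)]$ satisfies the three axioms, and to prove uniqueness by a standard triangular-basis argument in $H_T^*(\Fl)$. The key tool in both steps is a combinatorial computation of the restrictions $\tilde{W}_I|_{x_J}$ at the torus fixed points, using the filled-table interpretation from Section~\ref{sec comb}.

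For existence, the substitution $\Gamma_k \mapsto \{z_a : a \in J_k\}$ assigns a specific $z$-variable to every box of the $I$-table, and each symmetrization index $(\sigma_1,\ldots,\sigma_{N-1})$ contributes a product of type-1, type-2, and type-3 factors, all to be summed and then divided by $e_\lambda|_{x_J}$. For axiom (\ref{ax2}) I would show that at $J=I$ only the identity permutation survives the $\Sym_\lambda$ sum --- every non-trivial $\sigma$ forces a type-2 factor of the form $z_c - z_c = 0$ --- and the surviving term, after cancellation with $e_\lambda|_{x_I}$, produces exactly the tangent Chern factors $1+z_b-z_a$ with $a \in I_k$, $b \in I_l$, $k<l$, $a>b$ (from the type-1 factors) together with the normal Euler weights $z_b-z_a$ with $k<l$, $b>a$ (from the type-2 factors), matching the formulae displayed in Section~\ref{sec:flag} for $c(T_{x_I}\Omega_I)e(N_{x_I}\Omega_I)$. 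Axiom (\ref{ax1}) follows because the $\ell^{(k)}_I(a,b)$ always supply a complete set of tangent-weight factors $1+z_b-z_a$ at $x_J$ in every surviving term. Axiom (\ref{ax3}) is then a dimensional count: an extra factor of $e(N_{x_J}\Omega_J)$ is needed to reach the top degree $\dim\Fl$, and this only happens when $J=I$.

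For uniqueness, I would apply standard equivariant localization: the restriction map $H_T^*(\Fl) \hookrightarrow \bigoplus_J H_T^*(x_J)$ is injective. If $\kappa, \kappa'$ both satisfy (\ref{ax1})--(\ref{ax3}) and $\delta := \kappa - \kappa'$, then each $\delta|_{x_J}$ is divisible by $c(T_{x_J}\Omega_J)$, $\delta|_{x_I}=0$, and $\deg \delta|_{x_J} < \dim \Fl$ for every $J$. Expanding $\delta = \sum_K a_K [\tilde{W}_K(\Gamma,\zz)]$ in the weight-function basis and using that the matrix $\bigl(\tilde{W}_I|_{x_J}\bigr)_{I,J}$ is triangular with respect to a natural Bruhat-like partial order on $\Il$ --- with diagonal entries of top degree $\dim \Fl$ as obtained above --- a top-down induction on this order together with the degree bound forces $a_K=0$ for every $K$, hence $\delta = 0$.

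The principal technical obstacle is the combinatorial evaluation of $\tilde{W}_I|_{x_J}$ itself: one must carefully track the interplay of type-2 zeros and type-3 poles across the $\Sym_\lambda$ sum, isolate the unique surviving diagonal term at $J=I$, and extract the triangular vanishing pattern that underlies both axiom (\ref{ax3}) and the uniqueness step. This is the heart of the weight-function formalism of \cite{TV1, TV2, RTV1, RTV2, RTV3}; once it is in hand, the remaining ingredients (localization injectivity and triangular-matrix inversion) are routine.
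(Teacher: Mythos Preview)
The paper itself does not give a self-contained argument: it simply cites \cite[Thm~3.3.4]{MO} for existence and uniqueness of classes satisfying (I)--(III), and \cite[Thm~3.1, Thm~6.7]{RTV3} for the identification with $\tilde W_I$, noting only that (\ref{ax1}) is a local variant of the Maulik--Okounkov support axiom. Your proposal is essentially an outline of what those references do, so the two are aligned in substance; the difference is that you unpack the argument while the paper outsources it. Your existence sketch is correct in its essentials --- the single-survivor computation for (\ref{ax2}) at $J=I$ works exactly as you describe, and you rightly identify the divisibility (\ref{ax1}) and degree bound (\ref{ax3}) at general $J$ as the non-trivial combinatorial lemmas from the RTV papers.

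There is, however, a genuine gap in your uniqueness argument. You expand $\delta=\kappa-\kappa'$ in the weight-function basis and run a Bruhat induction using the degree bound. Two problems: first, the Bruhat \emph{vanishing} $\tilde W_I|_{x_J}=0$ for $\Omega_J\not\subset\overline{\Omega}_I$ is not among the axioms you verified in the existence step (axiom (\ref{ax3}) is only a degree bound), so you would need to establish it separately from the combinatorics before using it. Second, and more seriously, you have not shown that the $\tilde W_K$ form a $\C[\zz]$-basis of $H_T^*(\Fl)$ --- only a basis after inverting the equivariant parameters --- so the coefficients $a_K$ are a priori rational functions, and the degree comparison ``$\deg(a_J)+\dim\Fl<\dim\Fl$'' breaks down. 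The clean fix, which is what underlies the Maulik--Okounkov argument, is to expand $\delta$ instead in the Schubert basis $[\overline{\Omega}_K]$: this \emph{is} a $\C[\zz]$-basis, its restriction matrix is genuinely Bruhat-triangular with diagonal entries $e(N_{x_K}\Omega_K)$, and downward induction on $K$ combined with the coprimality of $c(T_{x_J}\Omega_J)$ and $e(N_{x_J}\Omega_J)$ in $\C[\zz]$ then forces all coefficients to vanish.
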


Since $\Wt_I$ is a rational function we need to explain what we mean by (\ref{eqn:kW}). This means that the substitution (\ref{eqn:rest}) of $\tilde{W}_I(\Gamma,\zz)$ is (a polynomial, and is) equal to $\kappa_I|_{x_J}$ for all $J \in \Il$. It is remarkable that although $\kappa_I$ can be represented by a polynomial in $\Gamma$, $\zz$ variables (by definition, see (\ref{eqn:presentation})) but we represented it by a rational function.

\smallskip

The first part of Theorem \ref{thm:axiom} follows from \cite[Thm 3.3.4]{MO}. Although (\ref{ax1}) is a local version of the axiom in \cite{MO}, the proof is essentially the same. Both parts of Theorem \ref{thm:axiom} follow from the K-theory analogue \cite[Thm 3.1 and Thm 6.7]{RTV3}, see also \cite{RTV1,RTV2}.

\section{Chern-Schwartz-MacPherson classes}

Let the torus $T=(\C^*)^n$ act on the algebraic manifold $M$. For a $T$ equivariant constructible function $f$ on $M$ one can consider its Chern-Schwartz-MacPherson (CSM) class $c^{SM}(f)\in H^*_T(M)$.
For an invariant subset $X\subset M$ we write $c^{SM}(X)$ for $c^{SM}(\One_X)$ where $\One_X$ is the indicator function of $X$.

The non-equivariant version of this notion was introduced by MacPherson \cite{M}, see also \cite{Sch}, and the equivariant version by Ohmoto \cite{O1}, see also \cite{W,O2}. We refer the reader to these papers for definitions and main properties, namely natural normalization, functoriality, product, and localization properties. CSM classes of Schubert cells and varieties, their recursion and positivity properties, are studied in \cite{AM1,H, AM2}.

\begin{remark}
It is customary to consider the CSM class of a variety $X$ in its (equivariant) homology $c_{SM}(X)\in H^T_*(X)$---this version does not depend on an ambient manifold $M$. In the present paper we will not consider this homology CSM class. We follow \cite{O1} and \cite{W} by mapping the homology CSM class $c_{SM}(X)$ first to $H^T_*(M)$ by homology push-forward, then applying (equivariant) Poincar\'e duality for $M$, and considering the resulting cohomology $c^{SM}(X)$ class living in $H_T^*(M)$.
\end{remark}

The class $c^{SM}(X)$ is a refinement of the notion of (equivariant) fundamental class; namely $c^{SM}(X)=[X] +$ higher degree terms, where $[X]$ is the (equivariant) fundamental class of the variety $X$ in $H^*_T(M)$.

Now we recall the properties of equivariant CSM classes we will need in the next section.

\begin{enumerate}[(A)]
\item (Linearity.) For equivariant constructible functions $f$ and $g$ we have $c^{SM}(f+g)=c^{SM}(f)+c^{SM}(g)$.
\item (Local model of CSM classes of varieties.) Suppose $T$ acts on a vector space $V$, and let $X \subset V$ be an invariant smooth subvariety. Then
\begin{equation}\label{eqn:smooth}
c^{SM}(X)= c(T_0X)e(V/T_0X)= \prod_i (1+\alpha_i) \cdot \prod_j \beta_j \qquad \in H^*_T(V)=H^*_T(\{pt\}).
\end{equation}
where $\alpha_i$'s are the tangent weights, and $\beta_j$'s are the normal weights of $X$. Indeed, when $c^{SM}(X)$ is considered in the (co)homology of $X$ then it is the total Chern class of the tangent bundle of $X$ \cite[Thm. 3.10]{O2}. In our convention (i.e. when $c^{SM}(X)$ is pushed forward to the cohomology of the ambient space) we obtain (\ref{eqn:smooth}).

\item \label{item:divisibility}
Suppose $T$ acts on a vector space $V$, and let $X$ be an invariant subvariety which contains an invariant smooth subvariety $X_0$. Moreover assume that an invariant linear subspace $W$ is transversal to $X$ and $W\oplus T_0X_0=V$. Then $c^{SM}(X)$ is divisible by $c(T_0X_0)$.
Indeed, in our convention Theorem 3.13 (2) of \cite{O2} reads
\[ \frac{   c^{SM}(X) }{c(V) } = \frac{ c^{SM}(X \cap W)}{c(W)},\]
which implies $c^{SM}(X)= c(V/W) c^{SM}(X \cap W) = c(T_0X_0) c^{SM}(X \cap W).$
\item  \label{item:W}
Let $X$ be a subvariety of the $T$-manifold $M$ with isolated fixed points. Let $x$ be a fixed point of $M$. Then  \cite[Thm 20]{W}
    \[
    c^{SM}(X)|_x=
    \begin{cases}
       e(T_xM)+\text{lower degree terms} & \text{if} \  x\in X \\
       0 & \text{if}\  x\not\in X.
    \end{cases}
    \]
    A remarkable feature of this fact which we will use below is that the top degree part of $c^{SM}(X)|_x$  does not depend on the variety $X$ as long as $x$ belongs to the variety.
\end{enumerate}

\section{CSM classes coincide with $\kappa$ classes} \label{sec:coincide}

Consider the torus action on $\Fl$ and their Schubert cells $\Omega_I\subset \Fl$ as in Section \ref{sec:flag}. Also recall the $\kappa_I$ classes defined axiomatically in Section \ref{sec:axiom}.

\begin{theorem} \label{thm:ck}
We have
\[
c^{SM}(\Omega_I)=\kappa_I \qquad\qquad \in H^*_T(\Fl).
\]
\end{theorem}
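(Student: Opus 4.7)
The plan is to exploit the uniqueness part of Theorem~\ref{thm:axiom}: I will verify that $c^{SM}(\Omega_I)$ satisfies the three interpolation axioms (I), (II), (III) of that theorem, which then forces $c^{SM}(\Omega_I)=\kappa_I$. Axiom (II) will be handled by a direct local calculation at $x_I$, while axioms (I) and (III) will be treated together by expanding $c^{SM}(\Omega_I)$ as a signed sum of CSM classes of Schubert varieties and applying properties (C) and (D) term by term.

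For axiom (II), I use the $T$-equivariant Bia{\l}ynicki--Birula affine chart at $x_I$: there is a $T$-stable open neighborhood $U\subset \Fl$ of $x_I$, equivariantly isomorphic to the vector space $T_{x_I}\Fl$, in which $\Omega_I\cap U$ is identified with the invariant linear subspace $T_{x_I}\Omega_I$. Property (B) in this chart yields
\[
c^{SM}(\Omega_I\cap U)\big|_{x_I}=c(T_{x_I}\Omega_I)\,e(N_{x_I}\Omega_I),
\]
and, since the restriction of a CSM class to an isolated fixed point is a local operation, this equals $c^{SM}(\Omega_I)|_{x_I}$.

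For (I) and (III), I use that $\One_{\overline{\Omega_J}}=\sum_{J'\le J}\One_{\Omega_{J'}}$, which inverts via the Bruhat-order M\"obius function $\mu$ to give, by linearity of CSM,
\[
c^{SM}(\Omega_I)=\sum_{J'\le I}\mu(J',I)\,c^{SM}(\overline{\Omega_{J'}}).
\]
Fix a fixed point $x_J$. If $J\not\le J'$ then $x_J\notin\overline{\Omega_{J'}}$ and $c^{SM}(\overline{\Omega_{J'}})|_{x_J}=0$ by property (D); if $J\le J'$, then $\Omega_J$ is a $T$-invariant smooth subvariety of $\overline{\Omega_{J'}}$ passing through $x_J$, and the invariant complement $W=N_{x_J}\Omega_J$ satisfies $W\oplus T_{x_J}\Omega_J=T_{x_J}\Fl$ and is transversal to $\overline{\Omega_{J'}}$ at $x_J$ (the Schubert variety locally splits as a product across $x_J$), so property (C) yields divisibility of $c^{SM}(\overline{\Omega_{J'}})|_{x_J}$ by $c(T_{x_J}\Omega_J)$. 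Summing over $J'$ proves (I). For (III), property (D) identifies the degree-$\dim\Fl$ part of $c^{SM}(\overline{\Omega_{J'}})|_{x_J}$ as $e(T_{x_J}\Fl)$ when $J\le J'$ and $0$ otherwise, so the top part of $c^{SM}(\Omega_I)|_{x_J}$ equals $e(T_{x_J}\Fl)\sum_{J'\,:\,J\le J'\le I}\mu(J',I)=\delta_{J,I}\,e(T_{x_J}\Fl)$ by the defining identity of $\mu$; hence for $J\neq I$ the top part vanishes.

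The step requiring the most care is axiom (II): one must invoke Bia{\l}ynicki--Birula to obtain a $T$-equivariant linear chart in which the cell becomes a linear subspace, and justify that restriction of $c^{SM}(\Omega_I)\in H_T^*(\Fl)$ to $x_I$ agrees with the restriction computed in the local chart $H_T^*(U)$. Once these local reductions are granted, the M\"obius-theoretic combinatorics and the appeal to (C) with the invariant complement $W=N_{x_J}\Omega_J$ present no further difficulty.
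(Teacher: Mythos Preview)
Your proof is correct and follows essentially the same route as the paper: expand $\One_{\Omega_I}$ as an alternating sum of $\One_{\overline{\Omega}_K}$ via M\"obius inversion on the Bruhat order, then verify the three axioms term by term using properties (B), (C), (D). The only cosmetic difference is that you handle axiom~(II) by a direct Bia{\l}ynicki--Birula chart argument for the cell $\Omega_I$, whereas the paper reads it off from the decomposition (for $J=I$ only the term $K=I$ survives, and $\overline{\Omega}_I$ is smooth at $x_I$); both amount to the same application of~(B).
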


\begin{proof}
We need to prove that the class $c^{SM}(\Omega_I)$ satisfies the axioms defining $\kappa_I$.

Let us write the indicator function of the Schubert cell as a linear combination of the indicator functions of Schubert varieties:
\begin{equation}\label{eqn:indfunct}
\One_{\Omega_I}=\sum_{K \leq I} d_{IK} \One_{\overline{\Omega}_K}.
\end{equation}
Here $K\leq I$ means $\Omega_K \subset \overline{\Omega}_I$ and  $d_{IK}$ are integer coefficients, $d_{II}=1$.
By linearity of CSM classes
\[
c^{SM}(\Omega_I)=\sum_{K\leq I} d_{IK} c^{SM}(\overline{\Omega}_K),
\]
and hence
\[
c^{SM}(\Omega_I)|_{x_J}= \sum_{K \leq I} d_{IK} c^{SM}(\overline{\Omega}_K)|_{x_J} = \sum_{J\leq K \leq I} d_{IK} c^{SM}(\overline{\Omega}_K)|_{x_J}.
\]
The behaviour of the varieties $\overline{\Omega}_K$ appearing on the RHS near $x_J$ are known in Schubert calculus: they all contain $\Omega_J$,  $\Omega_J$ is smooth at $x_J$ and the subspace $W$ required in (\ref{item:divisibility}) above exists. Therefore each term is divisible by $c(T_{x_J}\Omega_J)$. This proves axiom (\ref{ax1}).

If $I=J$ then the right hand side is just the term $c^{SM}(\overline{\Omega}_I)|_{x_I}$ which---according to the ``local model'' property above is $c(T_{x_I}\Omega_I)e(N_{x_I}\Omega_I)$. This proves axiom (\ref{ax2}).

Now let $J \leq I, J\not=I$. Using Theorem \cite[Thm 20]{W} as recalled above in (\ref{item:W}) we obtain
\[
c^{SM}(\Omega_I)|_{x_J}= \sum_{J\leq K \leq I} d_{IK}(e(T_{x_J}\Fl)+l.d.t.) =
\left( e(T_{x_J}\Fl) \cdot \sum_{J\leq K \leq I} d_{IK} \right) + l.d.t. ,
\]
where ``l.d.t.''($=$lower degree terms) means terms of degree strictly less than $\dim \Fl=\sum_{i<j} \lambda_i\lambda_j$. However, the sum $\sum_{J\leq K \leq I} d_{IK}$ vanishes due to substituting $x_J$ into the functional identity (\ref{eqn:indfunct}). We obtain that $c^{SM}(\Omega_I)|_{x_J}$ has degree strictly less than $\dim \Fl$. This proves axiom (\ref{ax3}).
\end{proof}

\begin{corollary}\label{corcor}
The equivariant CSM classes of Schubert cells of partial flag manifolds are defined by the axioms of Theorem \ref{thm:axiom}. Moreover $c^{SM}(\Omega_I)=[\Wt_I(\Gamma,\zz)]$.
\end{corollary}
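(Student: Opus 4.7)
My plan is to deduce Corollary \ref{corcor} directly from the two preceding theorems, which is essentially bookkeeping rather than a fresh argument. Theorem \ref{thm:ck} has just established the identity $c^{SM}(\Omega_I) = \kappa_I$ in $H_T^*(\Fl)$ for every $I \in \Il$. Theorem \ref{thm:axiom} supplies two further pieces of information about the right-hand side: the collection $\{\kappa_I\}_{I\in\Il}$ is the unique family of classes satisfying the interpolation axioms (\ref{ax1})--(\ref{ax3}), and each $\kappa_I$ admits the explicit representative $[\Wt_I(\Gamma,\zz)]$ via (\ref{eqn:kW}).

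For the first assertion of the corollary, I would transport the axiomatic characterization through the established equality: because $c^{SM}(\Omega_I) = \kappa_I$, any property that pins down $\kappa_I$ equally pins down $c^{SM}(\Omega_I)$. The proof of Theorem \ref{thm:ck} has, in fact, already verified that $c^{SM}(\Omega_I)$ satisfies (\ref{ax1})--(\ref{ax3}) directly, so both existence and uniqueness are in hand: existence is the content of that proof, and uniqueness comes from the uniqueness clause of Theorem \ref{thm:axiom}.

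For the explicit formula, I would simply chain the two equalities, writing $c^{SM}(\Omega_I) = \kappa_I = [\Wt_I(\Gamma,\zz)]$. The only mild subtlety, certainly not an obstacle, is that $\Wt_I$ is a rational function rather than a polynomial in the generators of the presentation (\ref{eqn:presentation}); the meaning of the bracket was however clarified in the remarks immediately following Theorem \ref{thm:axiom}, where one checks that the fixed-point substitutions (\ref{eqn:rest}) of $\Wt_I$ are polynomials and agree with $\kappa_I|_{x_J}$ for each $J$, thereby determining a well-defined cohomology class. I do not anticipate any real technical difficulty here, as the corollary is a direct repackaging of results already in hand.
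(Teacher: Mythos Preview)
Your proposal is correct and matches the paper's approach: the corollary is stated without proof precisely because it is an immediate combination of Theorem~\ref{thm:ck} (giving $c^{SM}(\Omega_I)=\kappa_I$) with both parts of Theorem~\ref{thm:axiom} (the axiomatic characterization and the formula $\kappa_I=[\Wt_I(\Gamma,\zz)]$). Your observation about the rational-function subtlety and its resolution via fixed-point restrictions is apt and simply restates the paper's own clarification following Theorem~\ref{thm:axiom}.
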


\begin{remark} 
The CSM classes satisfy obvious vanishing properties respecting the Bruhat order. Namely, if $\Omega_J\not\subset \overline{\Omega}_I$ then $c^{SM}(\Omega_I)|_{x_J}=0$. It is remarkable that one does not need to list this obvious vanishing property among the axioms, this property is a consequence of the axioms.
\end{remark}

Other remarkable properties of $\kappa_I$ classes include orthogonality properties \cite[Sec.5]{RTV2}, \cite[Thm. 6.6]{RTV3}, recursion \cite[Lemma 3.3]{RTV2},\cite[Thm. 6.10]{RTV3}, and R-matrix properties \cite[Cor. 3.8]{RTV2},\cite[Section 7]{RTV3}. According to Theorem \ref{thm:ck} these also hold for equivariant CSM classes of Schubert cells.

\section{CSM classes of Schubert cells in generalized flag manifolds}
\label{sec 8}

Let $G$ be a semisimple algebraic group and let $T\subset B \subset P$ be a maximal torus, Borel subgroup, and a parabolic subgroup in $G$. The $B$ orbits $\Omega_I$ are called Schubert cells in $G/P$. They are parameterized by certain elements of the Weyl group. Each orbit contains a $T$ fixed point $x_I$. The first part of Corollary \ref{corcor} generalizes to the general flag variety $G/P$ as follows.

\begin{theorem}
The $T$ equivariant Chern-Schwartz-MacPherson classes $c^{SM}(\Omega_I)$ are the unique classes in $H_T^*(G/P)$ satisfying the properties
\begin{enumerate}[(I)]
\item \label{pax1} $c^{SM}(\Omega_I)|_{x_J}$ is divisible by $c(T_{x_J}\Omega_J)$;
\item \label{pax2} $c^{SM}(\Omega_I)|_{x_I}=c(T_{x_I}\Omega_I)e(N_{x_I}\Omega_I)$;
\item \label{pax3} $c^{SM}(\Omega_I)|_{x_J}$ has degree less than $\dim G/P$, if $I\not= J$.
\end{enumerate}
\end{theorem}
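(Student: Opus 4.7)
The plan is to adapt the proof of Theorem \ref{thm:ck} verbatim for the existence part, and to supply an explicit Schubert-basis argument for uniqueness (which was implicit in Section \ref{sec:axiom} via \cite{MO}).

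\emph{Existence.} I verify that $c^{SM}(\Omega_I)$ satisfies (\ref{pax1})--(\ref{pax3}) exactly as in Section \ref{sec:coincide}. Decompose the indicator function $\One_{\Omega_I}=\sum_{K\leq I} d_{IK}\One_{\overline{\Omega}_K}$ with $d_{II}=1$, so that by linearity
\[
c^{SM}(\Omega_I)|_{x_J}=\sum_{J\leq K\leq I} d_{IK}\,c^{SM}(\overline{\Omega}_K)|_{x_J}.
\]
For each $J\leq K$ the variety $\overline{\Omega}_K$ contains the smooth subvariety $\Omega_J$ through $x_J$, and the tangent and normal spaces of $\Omega_J$ at $x_J$ are sums of $T$-weight spaces inside $T_{x_J}(G/P)$. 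I take $W$ to be the sum of the normal weight spaces---this is a $T$-invariant linear complement to $T_{x_J}\Omega_J$, and transversality to $\overline{\Omega}_K$ is automatic because $T_{x_J}\Omega_J\subseteq T_{x_J}\overline{\Omega}_K$. Property (C) then yields divisibility by $c(T_{x_J}\Omega_J)$, giving (\ref{pax1}). Property (B) applied to $K=I=J$ gives (\ref{pax2}). For (\ref{pax3}), property (D) combined with $\sum_{J\leq K\leq I} d_{IK}=\One_{\Omega_I}(x_J)=0$ when $J\neq I$ forces cancellation of the top-degree contributions.

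\emph{Uniqueness.} Let $\kappa,\kappa'\in H_T^*(G/P)$ both satisfy (\ref{pax1})--(\ref{pax3}) for the same $I$, and set $\Delta=\kappa-\kappa'$. Then for every $J$ the restriction $\Delta|_{x_J}$ is divisible by $c(T_{x_J}\Omega_J)$ and has degree strictly less than $\dim G/P$ (either by (\ref{pax3}) when $J\neq I$, or because $\Delta|_{x_I}=0$). Expand $\Delta=\sum_J c_J[\overline{\Omega}_J]$ in the equivariant Schubert basis with $c_J\in H_T^*(\mathrm{pt})$, and choose $K$ maximal in Bruhat order with $c_K\neq 0$. Since $[\overline{\Omega}_J]|_{x_K}=0$ unless $K\leq J$, and since all $J>K$ have $c_J=0$ by maximality, only the $J=K$ term survives:
\[
\Delta|_{x_K}=c_K\cdot e(N_{x_K}\Omega_K).
\]
The factors $1+\alpha_i$ of $c(T_{x_K}\Omega_K)$ have nonzero constant term and are therefore coprime in $H_T^*(\mathrm{pt})$ to the linear forms $\beta_j$ comprising $e(N_{x_K}\Omega_K)$. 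Divisibility of $\Delta|_{x_K}$ by $c(T_{x_K}\Omega_K)$ thus forces $c(T_{x_K}\Omega_K)\mid c_K$, so $\deg c_K\geq\dim\Omega_K$ and $\deg\Delta|_{x_K}\geq\dim G/P$, contradicting the strict degree bound. Hence every $c_J$ vanishes and $\Delta=0$.

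The main obstacle I anticipate is not conceptual but a bookkeeping check: verifying in the parabolic setting---where Schubert cells are indexed by minimal coset representatives in $W/W_P$---that the $T$-weight decomposition of $T_{x_J}(G/P)$ and the local geometry of each $\overline{\Omega}_K$ near $x_J$ mirror the partial flag case of Section \ref{sec:flag}, so that properties (B), (C), and (D) of Section 6 apply to every Bruhat pair $J\leq K$ verbatim. With that local structure in hand, both halves of the theorem follow formally from the arguments of Section \ref{sec:coincide}.
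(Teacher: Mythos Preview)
Your existence argument is exactly the paper's: it simply says that the verification of (\ref{pax1})--(\ref{pax3}) ``follows the same way as in the special case detailed in Section \ref{sec:coincide}.'' Your flagged obstacle (the local product structure of $\overline{\Omega}_K$ along $\Omega_J$ in the $G/P$ setting) is precisely what the paper sweeps under ``known in Schubert calculus''; the clean justification is that $\overline{\Omega}_K$ is $B$-invariant, so the unipotent radical slides $\Omega_J$ along itself inside $\overline{\Omega}_K$, giving the local trivialization needed for property (C). Your one-line transversality remark (via $T_{x_J}\Omega_J\subseteq T_{x_J}\overline{\Omega}_K$) only handles the point $x_J$ and should be replaced by this group-action argument, but this is the same level of detail the paper gives.

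Where you genuinely diverge is uniqueness. The paper does not argue uniqueness at all in the $G/P$ case: it invokes \cite[Section 3.3.4]{MO}, noting that the Maulik--Okounkov axioms (with a global version of (\ref{pax1}) and an auxiliary parameter $h$) characterize the stable-envelope classes, and that setting $h=1$ recovers the present axioms. Your Schubert-basis argument is a self-contained and elementary replacement: pick $K$ maximal with $c_K\neq 0$, use $[\overline{\Omega}_J]|_{x_K}=0$ for $J\not\geq K$ to isolate $\Delta|_{x_K}=c_K\,e(N_{x_K}\Omega_K)$, then exploit coprimality of the inhomogeneous factors $1+\alpha_i$ with the homogeneous $\beta_j$ in the UFD $H_T^*(\mathrm{pt})$ to force $\deg c_K\geq\dim\Omega_K$ and contradict the degree bound. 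This is correct and has the advantage of avoiding the stable-envelope machinery entirely; the paper's route, by contrast, makes the link to \cite{MO} explicit, which is one of its stated goals.
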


\begin{proof}
The existence and uniqueness of classes satisfying the itemized properties is proved in \cite{MO}. (A guide to the reader: Maulik and Okounkov prove that certain classes are characterized by three axioms, see \cite[Section 3.3.4]{MO}. Their three axioms for $G/P$ are (\ref{pax2}), (\ref{pax3}) above, together with a global version of (\ref{pax1}). The global versus local versions of (\ref{pax1}) does not affect the arguments. Also, the Maulik-Okounkov classes are homogeneous, depending on an extra parameter $h$. Our non-homogeneous versions are obtained by putting $h=1$.)

The fact that the $c^{SM}(\Omega_I)$ classes satisfy the properties follows the same way as in the special case detailed in Section \ref{sec:coincide}.
\end{proof}

\begin{remark}  The map sending fixed points $\{x_I\}$ to the   
    classes $\{ c^{SM} (\Omega_I) \}$ is essentially the stable envelope map of Maulik-Okounkov
     corresponding to the pair $P\subset G$.
\end{remark}


\begin{thebibliography}{[RTV1]}
\bibliographystyle{alpha}


\bibitem[AM1]{AM1} P. Aluffi, L. C. Mihalcea: Chern classes of Schubert cells and varieties; J.
Algebraic Geom., 18(1):63--100, 2009.

\bibitem[AM2]{AM2} P. Aluffi, L. C. Mihalcea: Chern-Schwartz-MacPherson classes for Schubert cells in flag manifolds, arXiv:1508.01535,  2016

\bibitem[FR]{FR} L. Feher, R. Rimanyi: Schur and Schubert polynomials as Thom polynomials---cohomology of moduli spaces;
Cent. European J. Math. 4 (2003) 418--434

\bibitem[H]{H} J. Huh: Positivity of Chern classes of Schubert cells and varieties,
Journal of Algebraic Geometry, to appear.

\bibitem[M]{M}  R. MacPherson: Chern classes for singular algebraic varieties, Ann. of Math. 100
(1974), 421–432

\bibitem[MO]{MO} D. Maulik, A. Okounkov: Quantum Groups and Quantum Cohomology; arXiv:1211.1287, 2012

\bibitem[O1]{O1} T. Ohmoto: Equivariant Chern classes of singular algebraic varieties with group
actions, Math. Proc. Cambridge Phil. Soc.140 (2006), 115–134

\bibitem[O2]{O2} T. Ohmoto: Singularities of maps and characteristic classes, to appear in Proc. of Real and Complex Singularities 2012

\bibitem[PP]{PP} A. Parusi{\'n}ski and P. Pragacz, Chern-Schwartz-MacPherson classes and the Euler
characteristic of degeneracy loci and special divisors, Jour. Amer. Math. Soc. 8 (1995), no. 4, 793–817

\bibitem[RTV1]{RTV1} R. Rimanyi, V. Tarasov, A. Varchenko: Cohomology classes of conormal bundles of Schubert varieties and Yangian weight functions; to appear in Math. Z., 2015

\bibitem[RTV2]{RTV2} R. Rimanyi, V. Tarasov, A. Varchenko: Partial flag varieties, stable envelopes and weight functions; Quantum Topology 6 (2015) 1-32

\bibitem[RTV3]{RTV3} R. Rimanyi, V. Tarasov, A. Varchenko: Trigonometric weight functions as K-theoretic stable envelop maps for the cotangent bundle of a flag variety; to appear in Geometry and Physics 2015

\bibitem[Sch]{Sch} M. H. Schwartz, Classes caract{\'e}ristiques definies par une stratification d’une vari{\'e}t{\'e}
analytique complexe, C. R. Acad. Sci. Paris t.260 (1965), 3262-3264, 3535-3537

\bibitem[TV1]{TV1} V. Tarasov, A. Varchenko: Jackson Integral Representations for Solutions
to the Quantized Knizhnik-Zamolodchikov Equation; 1-50, (1993) arXiv:hep-th/9311040

\bibitem[TV2]{TV2}
V. Tarasov, A. Varchenko, {\it Combinatorial formulae for nested Bethe vectors}, SIGMA
{\bf 9} (2013), 048, 1--28

\bibitem[W]{W} A. Weber:  Equivariant Chern classes and localization theorem, Journal of Singularities Volume 5 (2012), 153-176






\end{thebibliography}
\end{document}